\documentclass[12pt]{article}
\usepackage{amssymb,amsthm,amsmath}
\usepackage{graphicx}
\usepackage{epsfig}
\usepackage{epstopdf}
\usepackage{subfigure}
\usepackage{fullpage}
\usepackage{hyperref}

\newtheorem{theorem}{Theorem}

\newtheorem{prop}[theorem]{Proposition}
\newtheorem{claim}[theorem]{Claim}

\newtheorem{obs}[theorem]{Observation}

\newtheorem{problem}[theorem]{Problem}

\newcommand{\NPc}{{\sc NP}-complete }
\newcommand{\NPcx}{{\sc NP}-complete}

\title{Topological orderings of weighted directed acyclic graphs}
\author{
	{D\'aniel Gerbner}\thanks{Hungarian Academy of Sciences, Alfr\'ed R\'enyi Institute of Mathematics, P.O.B. 127, Budapest H-1364, Hungary. Email gerbner.daniel@renyi.mta.hu. Phone number +3614838362. Research supported by Hungarian National Science Fund (OTKA), grant PD 109537.}
	\and
	Bal\'azs Keszegh\thanks{Hungarian Academy of Sciences, Alfr\'ed R\'enyi Institute of Mathematics, P.O.B. 127, Budapest H-1364, Hungary. Research supported by Hungarian National Science Fund (OTKA), grant PD 108406 and grant NN 102029 (EUROGIGA project GraDR 10-EuroGIGA-OP-003) and the J\'anos Bolyai Research Scholarship of the Hungarian Academy of Sciences}
	\and
	Cory Palmer\thanks{Department of Mathematical Sciences,
		University of Montana, Missoula, Montana 59801, USA. Research supported by Hungarian National Science Fund (OTKA), grant NK 78439.}
	\and
	D\"om\"ot\"or P\'alv\"olgyi\thanks{E\"otv\"os Lor\'and University, Faculty of Science, Department of Computer Science, P\'azm\'any P\'eter 1/C, Budapest H-1117, Hungary. Research supported by Hungarian National Science Fund (OTKA), grant PD 104386 and the J\'anos Bolyai Research Scholarship of the Hungarian Academy of Sciences.}
}

\begin{document}
	
	\maketitle

\begin{abstract}
 We call a topological ordering of a weighted directed acyclic graph \emph{non-negative} if the sum of weights on the vertices in any prefix of the ordering is non-negative.
 We investigate two processes for constructing non-negative topological orderings of weighted directed acyclic graphs. The first process is called a \emph{mark sequence} and the second is a generalization
 called a \emph{mark-unmark sequence}. We answer a question of Erickson by showing that every non-negative topological ordering that can be realized by a mark-unmark sequence can also be realized
 by a mark sequence. We also investigate the question of whether a given weighted directed acyclic graph has a non-negative topological ordering. We show that even in the simple case
 when every vertex is a source or a sink the question is \NPcx.
\end{abstract}


\section{Introduction}

A \emph{directed acyclic graph} (or DAG) is a directed graph with no directed cycles.
A subset $M$ of vertices of $G$ is \emph{outdirected} if every edge 
between $M$ and $V(G) \setminus M$ is directed towards $V(G) \setminus M$ (i.e., edges directed towards $M$ are contained in $M$).
A \emph{prefix of length $k$} of a sequence $s$ is the subsequence of the first $k$ terms of $s$.
A \emph{topological ordering} of a DAG $G$ is an ordering of the vertices of $G$ such that every prefix of the ordering is outdirected.
The following two processes yield topological orderings of a given DAG $G$ with $n$ vertices.

A \emph{mark sequence} of $G$ is a sequence $M_1,M_2,\dots, M_n$ of subsets of $V(G)$ formed in the following way:
first choose an arbitrary source $v$ and put $M_1 = \{v\}$, i.e., \emph{mark} $v$ in step $1$. 
For $i=2,3,4,\dots, n$, choose a vertex $u \not \in M_{i-1}$ such that $\{u\} \cup M_{i-1}$ is outdirected and put $M_i= \{u\} \cup M_{i-1}$, i.e., \emph{mark} $u$ in step $i$.

A \emph{mark-unmark sequence} of $G$ is a sequence of subsets of $V(G)$ formed in the following way:
first choose an arbitrary source $v$ and put $M_1 = \{v\}$, i.e., \emph{mark} $v$ in step $1$. For $i=2,3,4,\dots, n$ 
either (i) choose a vertex $u \not \in M_{i-1}$ such that $\{u\} \cup M_{i-1}$ is outdirected and put $M_i= \{u\} \cup M_{i-1}$, i.e., \emph{mark} $u$ in step $i$
or (ii) choose a vertex $u \in M_{i-1}$ such that $M_{i-1} \setminus \{u\}$ is outdirected and put $M_i = M_{i-1} \setminus \{u\}$, i.e., \emph{unmark} $u$ in step $i$.
This process stops when $M_i=V(G)$.

Clearly, mark-unmark sequences are a generalization of mark sequences.
Because we only mark a vertex if the new set $M_i$ is outdirected, we get a topological ordering by arranging the vertices
of $G$ by the last step in which they were marked in the mark-unmark sequence.
In particular, the ordering of elements given by a mark sequence is simply a topological ordering.

A DAG $G$ is called \emph{weighted} if there is an assignment of real numbers to each vertex of $G$.
We call a topological ordering \emph{non-negative} if the sum of the weights of the vertices in every prefix is non-negative.
Similarly a mark-unmark (or mark) sequence is \emph{non-negative} if at each step the sum of the weights in $M_i$ is non-negative.

Clearly, a non-negative mark sequence is equivalent to a non-negative topological ordering.
However, a non-negative mark-unmark sequence may give a negative topological ordering (we use \emph{negative} in place of ``not non-negative.'').
For example, let $G$ be a weighted DAG on four vertices $\{a,b,c,d\}$ with a single edge $bc$ and weights $w(a)=w(c)=w(d)=1$, $w(b)=-1$.
Consider the following non-negative mark-unmark sequence: mark $a$, $b$, $c$, then unmark $a$, then mark $d$ and $a$.
This gives the topological ordering $b,c,d,a$, which is negative. This suggests the following question of
Erickson\footnote{Positive topological ordering, take 2, http://cstheory.stackexchange.com/questions/1399}: is there a weighted DAG $G$ that has a non-negative mark-unmark sequence but no non-negative mark sequence? 

We answer this question in the negative with the following theorem.

\begin{theorem}\label{main-one}  
	If a weighted DAG $G$ has a non-negative mark-unmark sequence, then $G$ also has a non-negative mark sequence.
\end{theorem}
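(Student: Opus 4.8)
The plan is to induct on the number of vertices $n=|V(G)|$, peeling one vertex off the \emph{end} of a non-negative mark sequence. Throughout I identify an inward-closed set $S$ with its weight sum $w(S)=\sum_{v\in S}w(v)$, and I will use repeatedly that $w$ is \emph{modular}, $w(S)+w(T)=w(S\cup T)+w(S\cap T)$, and that unions and intersections of inward-closed sets are again inward-closed. In this language a non-negative mark sequence is exactly a chain $\emptyset=M_0\subset M_1\subset\cdots\subset M_n=V(G)$ of inward-closed sets with every $w(M_i)\ge 0$, while the given non-negative mark-unmark sequence is a walk $N_0=\emptyset,N_1,\dots,N_m=V(G)$ in which consecutive sets differ in a single vertex, every set is inward-closed, and every $w(N_i)\ge 0$. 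In particular $w(V(G))\ge 0$, since the walk reaches $V(G)$.

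The first, easy, step is a reduction that strips off a sink of non-positive weight. Suppose $G$ has a sink $v$ with $w(v)\le 0$. As $v$ is a sink, deleting $v$ from every set of the walk yields sets $N_i\setminus\{v\}$ that are still inward-closed in $G-v$ and still change by at most one vertex between consecutive indices, so after removing repetitions we obtain a mark-unmark sequence of $G-v$ from $\emptyset$ to $V(G)\setminus\{v\}$. It is non-negative because $w(N_i\setminus\{v\})$ is either $w(N_i)$ or $w(N_i)-w(v)\ge w(N_i)\ge 0$. By induction $G-v$ has a non-negative mark sequence; appending $v$ as the final mark gives a mark sequence of $G$ whose last set is $V(G)$, which is non-negative since $w(V(G))\ge 0$. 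The symmetric attempt to strip a \emph{source} fails: projecting out a positive source can create a negative prefix, while prepending a non-positive source can also create one, so the two requirements are incompatible; this is why the reduction is performed only at sinks.

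The main obstacle is therefore the case in which \emph{every} sink of $G$ has strictly positive weight. The natural candidate to delete is the sink $v^{*}$ marked in the very last step of the walk: then $N_{m-1}=V(G)\setminus\{v^{*}\}$ is one of the walk's sets, so $w(V(G)\setminus\{v^{*}\})\ge 0$ and removing $v^{*}$ keeps the top set non-negative. The trouble is that, because $w(v^{*})>0$, the projected walk $N_i\setminus\{v^{*}\}$ may now dip below zero exactly at those sets $N_i\ni v^{*}$ with $w(N_i)<w(v^{*})$, i.e.\ where $w(N_i\setminus\{v^{*}\})<0$. Thus the projected walk can no longer be reused verbatim, and this is the crux of the whole argument.

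To get past this I would not fix the last-marked sink but instead hunt for \emph{some} sink $v$ whose deletion introduces no dip, i.e.\ with $w(N_i)\ge w(v)$ for every walk-set $N_i\ni v$; deleting such a $v$ then goes through exactly as in the easy step. I expect to find it by an extremal argument powered by modularity: assuming for contradiction that every sink $v$ is witnessed by a walk-set $D_v\ni v$ with $w(D_v\setminus\{v\})<0$, one combines the witnessing inward-closed sets via $w(S)+w(T)=w(S\cup T)+w(S\cap T)$ to manufacture an inward-closed set on, or reachable along, the walk whose weight is forced below zero, contradicting non-negativity of the walk. The delicate point is controlling how the dips of different sinks interfere; should a single global exchange not suffice, the fallback is to reroute the projected walk around each dip, replacing an excursion through a negative set $N_i\setminus\{v\}$ by a detour that keeps $v$ marked and invoking modularity to certify the detour stays non-negative. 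In either realization, once a dip-free sink is isolated the induction closes.
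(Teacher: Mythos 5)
There is a genuine gap, and it sits exactly where you locate ``the crux of the whole argument.'' Your easy reduction (delete a sink $v$ with $w(v)\le 0$, project the walk, induct, append $v$ last) is fine, but in the remaining case --- every sink has positive weight --- you only \emph{conjecture} the key fact, namely that some sink $v$ is dip-free in the sense that $w(N_i)\ge w(v)$ for every walk-set $N_i\ni v$. You offer no proof, only the hope that modularity of $w$ will ``manufacture'' a contradiction, with an unexamined fallback of ``rerouting'' the walk. That is not a proof, and worse, the claim you hope to prove is false as stated for an arbitrary non-negative mark-unmark walk. Take $V(G)=\{a,b,c,s,t\}$ with weights $w(a)=w(s)=w(t)=2$, $w(b)=w(c)=-2$, and edges $c\to s$, $b\to t$. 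The sinks are $a$, $s$, $t$, all of weight $2$. The walk $\{a\},\{a,c\},\{a,c,s\},\{a,b,c,s\},\{a,b,c,s,t\},\{a,b,c,t\},\{a,b,c,s,t\}$ (mark $a,c,s,b,t$, unmark $s$, remark $s$) is a valid non-negative mark-unmark sequence with weights $2,0,2,0,2,0,2$, yet every sink lies in some walk-set of weight $0<2$: $a$ in $\{a,c\}$, $s$ in $\{a,b,c,s\}$, $t$ in $\{a,b,c,t\}$. So no dip-free sink exists relative to this walk, and your projection step has nothing to delete.

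One can try to rescue this by first passing to a shortest non-negative mark-unmark sequence (which kills the redundant unmark/remark above), but then you are implicitly claiming that a shortest such sequence admits a dip-free sink --- and since a shortest sequence having no unmark steps is essentially the theorem itself, this is circular unless you supply an independent argument. The paper avoids the vertex-by-vertex induction entirely: it takes a shortest non-negative mark-unmark sequence, proves that the set $U_i$ of vertices unmarked among the first $i$ steps satisfies $w(U_i)>0$ (by showing that a minimal $U_i$-outward closed set of non-positive weight could be excised from the sequence to produce a shorter non-negative sequence), and then reads off the mark sequence as the order of \emph{first} markings, each prefix of which equals some $M_j\cup U_j$ and hence has weight $w(M_j)+w(U_j)>0$. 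Your ``reroute the projected walk around each dip'' fallback is, in effect, gesturing at this excision argument, but all of the actual work --- identifying the right invariant and proving the excised sequence is still a valid non-negative mark-unmark sequence --- remains to be done.
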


This problem was motivated by a question of Eppstein\footnote{Positive topological ordering, http://cstheory.stackexchange.com/questions/1346}, which asked to determine the complexity to decide whether a weighted DAG $G$ has a non-negative topological ordering.
His motivation was related to abstract Fr\'echet distance problems.
This question turned out to be practically equivalent to an \NPc problem of Garey and Johnson \cite{GJ}, called SEQUENCING TO MINIMIZE MAXIMUM CUMULATIVE COST, which we will not define here in its full generality.
In fact, the problem is \NPc even in the following special case.

\begin{theorem}\label{main-two}
	Let $G$ be a weighted DAG such that every vertex is either a source or a sink.
	Deciding whether $G$ has a non-negative topological ordering is \NPcx.
\end{theorem}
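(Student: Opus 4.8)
The plan is to prove the two parts — membership and hardness — separately. Membership in \NPx{} is immediate and is essentially already noted in the excerpt: a topological ordering itself is a polynomial-size certificate, and verifying that it respects all edges and that every prefix has non-negative weight takes linear time. So the entire content is the hardness reduction, and I would build a polynomial-time reduction from a standard \NPc problem — I would try \textsc{3-Sat} first, since its variable/clause structure is the most flexible to encode, while keeping a number problem such as \textsc{Subset Sum} in reserve as a possibly cleaner source.

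Before designing gadgets I would first pin down what a source--sink DAG can and cannot force, since this dictates the whole construction. Because every vertex is a source or a sink, edges run only from sources to sinks, and the only precedence constraint is that a sink must be placed after all of its in-neighbour sources; sources themselves are never blocked. Consequently a positive source can always be scheduled at any moment, so front-loading all positive sources never hurts non-negativity. This has two important consequences: (i) there is no mechanism to force the running sum to stay \emph{below} any threshold — the sum can always be raised by scheduling positive sources early — so the reduction must come from a ``one-sided'' (monotone) problem rather than from an exact-target problem like \textsc{Partition}; and (ii) all genuine difficulty must come from \emph{positive sinks that are locked behind negative sources} (equivalently, by reversing all edges, from negative sinks sharing positive sources). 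This is the engine I would use: a sink with positive weight can only be ``cashed'' after its negative prerequisite sources have been ``paid for'', each payment temporarily lowering the running sum, and when several sinks share prerequisite sources, paying once unlocks many rewards.

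With this engine fixed, the reduction would use a single large positive ``budget'' source $D$ together with negative ``cost'' sources and positive ``reward'' sinks, tuned so that the total weight is non-negative but the budget is just tight enough that the costs can only be paid in an order in which enough rewards have already been unlocked to keep every prefix non-negative. Concretely, for \textsc{3-Sat} I would use one cost source per literal occurrence and one reward sink per clause, with edges and weights chosen so that a clause's reward becomes affordable exactly when one of its literals has been ``set true,'' and variable gadgets built from paired sources forced (by large weights, exploiting the mechanism of the previous paragraph) into one of two canonical relative orders corresponding to \textbf{true}/\textbf{false}. The intended equivalence is: a satisfying assignment yields a schedule that unlocks every clause reward before the budget is exhausted, hence a non-negative topological ordering; conversely, any non-negative ordering must, at the tight budget, pay costs in a pattern that reads off a consistent assignment satisfying every clause.

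I expect the main obstacle to be the gadget design and its correctness proof, not the overall strategy. The delicate points are calibrating the weights so that (a) large ``forcing'' weights genuinely pin the intended local orderings without creating unintended feasible schedules, (b) the budget is simultaneously large enough to admit the honest schedule and small enough to rule out cheating, and (c) both directions of the equivalence go through — in particular the converse, showing that \emph{every} non-negative ordering (not just the intended one) induces a valid satisfying assignment, which is typically where such reductions break and must be controlled by arranging that each gadget's only affordable completions are the intended ones. A final routine check is that all numbers used stay polynomially bounded in size so that the reduction runs in polynomial time.
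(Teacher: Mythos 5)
Your proposal correctly disposes of membership in \NP and correctly isolates the only source of hardness in a two-layer DAG (positive sinks locked behind negative sources), but it stops exactly where the proof has to begin: no gadget is actually constructed, no weights are specified, and both directions of the equivalence are deferred to future work. The step most likely to fail is the one you describe as ``variable gadgets built from paired sources forced by large weights into one of two canonical relative orders.'' Your own structural analysis shows why this is problematic: sources are never blocked, positive sources can always be front-loaded, and negative sources can be postponed indefinitely as long as the sinks they guard are not yet needed, so there is no evident mechanism in this model to force a mutually exclusive \textbf{true}/\textbf{false} choice between two groups of sources. Enforcing that exclusivity is precisely the crux of any SAT-style reduction here, and the proposal offers no construction for it, so as written there is a genuine gap rather than a routine verification left to the reader.

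It is worth noting that the paper sidesteps this calibration problem entirely by taking a different route: it reduces from LARGEST BALANCED INDEPENDENT SET (equivalently, BALANCED COMPLETE BIPARTITE SUBGRAPH) through the problem of extending a bipartite graph to one with a unique perfect matching, and observes that a non-negative topological ordering of a source--sink DAG with all weights $\pm 1$ (plus one auxiliary $+1$ vertex) is exactly an ordering $a_1,b_1,a_2,b_2,\dots$ witnessing unique-matching extendability. All weights are $\pm 1$, so no budget tuning or large forcing weights are needed, and the combinatorial content is pushed into a clean pigeonhole argument (Claim 8 in the paper) rather than into gadget correctness. If you want to salvage your approach, you would need either to invent a working exclusivity gadget within the source/sink constraint or to switch the source problem to one, like the paper's, whose structure already matches what prefix-nonnegativity can express.
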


The proof of hardness is through a series of reductions, which have been noticed/proved by different people (including the authors) and are hard to gather from the internet, so we include the full proof of Theorem~\ref{main-two} in Section~\ref{NPC-section}.
Theorem~\ref{main-one} is proved in Section~\ref{MU-section}.


\section{Marking and Unmarking}\label{MU-section}
In this section we prove Theorem \ref{main-one}.\footnote{A sketch of this proof posted by the fourth author can be found online at http://cstheory.stackexchange.com/questions/1399}
In particular, given a weighted DAG $G$ and a non-negative mark-unmark sequence, we will construct a non-negative mark sequence for $G$. We begin with some definitions.
By $w(X)$ we denote the sum of the weights of the elements of a set of vertices $X$. 
We say that a set $Y \subseteq X$ is $X$-\emph{indirected} if every edge between $Y$ and $X \setminus Y$ is directed towards $Y$.
Similarly, say that a set $Y \subseteq X$ is $X$-\emph{outdirected} if every edge between $Y$ and $X \setminus Y$ is directed towards $X \setminus Y$.
For simplicity, we call a set of vertices $Y$ of a DAG $G$ \emph{outdirected} (\emph{indirected}) if $Y$ is $V(G)$-outdirected ($V(G)$-indirected). Note that this definition corresponds to the definition of indirected given in the previous section.
Figure~1 (a) gives an example of $X$-indirected and $X$-outdirected sets.

\begin{figure}[t]\label{figure1}
	\centering
	\subfigure[$Y_{in}$ is an  $X$-\emph{indirected} set, $Y_{out}$ is an $X$-\emph{outdirected} set.]{\label{fig:inoutdirected}
		\includegraphics[scale=1]{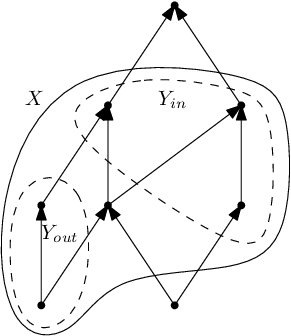}     
	}        
	\hskip 40mm
	\subfigure[12 steps of a mark-unmark sequence, number $j$ (resp. $-j$) denotes that the vertex was added (resp. removed) in step $j$.]{\label{fig:markunmarkexample}
		\includegraphics[scale=1]{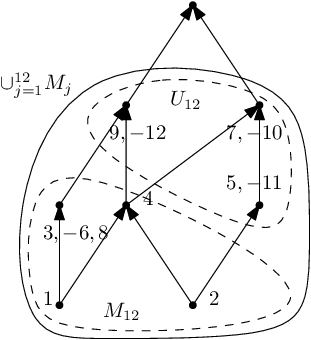}          
	} 
	\caption{Examples}
\end{figure}

\begin{proof}[Proof of Theorem~\ref{main-one}]
	Let $G$ be a weighted DAG with a non-negative mark-unmark sequence. Let $M_1,M_2,$ $\ldots,$  $M_t$ be a mark-unmark sequence with at least one unmark step (otherwise we are done) of minimum length.
	For $i\in [t]$,\footnote{Here (and later) $[t]$ stands for $\{1,\ldots,t\}$.}
	put $U_i=(\cup_{j=1}^{i-1} M_j) \setminus M_i$, i.e., the set of elements that were marked in the first $i-1$ steps, but later became unmarked (in one of the first $i$ steps).
	Note that $U_i$ is $(M_i\cup U_i)$-indirected. Figure~1 (b) gives an example of the steps of a mark-unmark sequence and of $U_i$.
	
	\begin{claim}\label{posi} 
		$w(U_i)>0$ for all $i$.
	\end{claim}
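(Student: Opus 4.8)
The plan is to argue by contradiction using the minimality of the sequence. Suppose some $U_i$ is nonempty with $w(U_i)\le 0$ (when $U_i=\emptyset$ there is nothing to prove since $w(\emptyset)=0$). I will produce a strictly shorter non-negative mark-unmark sequence, contradicting the choice of $M_1,\dots,M_t$. The guiding intuition is that the only reason to mark a vertex and later unmark it is to keep the running sum non-negative, so such temporarily-marked vertices ought to carry positive weight; if the entire batch $U_i$ has non-positive weight, it represents wasted effort that can be deleted.

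Concretely, I would keep the tail $M_{i+1},\dots,M_t$ unchanged and replace the prefix $M_1,\dots,M_i$ by the reduced sequence obtained from $M_1\setminus U_i,\dots,M_i\setminus U_i$ after deleting repeats; this reaches $M_i\setminus U_i=M_i$, so it glues onto the tail. Two of the three things to check are straightforward and both rest on $U_i$ being $(M_i\cup U_i)$-outward closed. First, each $M_j\setminus U_i$ is inward closed: if an edge $xy$ had $y\in M_j\setminus U_i$ and $x\in U_i$, then since $M_j\subseteq M_i\cup U_i$ we would have $y\in M_i$ and an edge from $U_i$ into $M_i$, which is forbidden; hence $x\notin U_i$, so $x\in M_j\setminus U_i$. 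The same outward-closedness shows the reduced sequence starts legitimately: the first vertex of $M_i$ marked in the original order has, at that moment, only $U_i$-vertices marked before it, and since there is no edge from $U_i$ into $M_i$ it has no in-neighbour at all, i.e. it is a source. Second, the new sequence is strictly shorter: because $U_i\neq\emptyset$, each vertex of $U_i$ was marked and later unmarked among the first $i$ steps, so deleting those operations removes at least two steps.

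The main obstacle is the remaining requirement, that the reduced prefix is non-negative, i.e. $w(M_j\setminus U_i)=w(M_j)-w(M_j\cap U_i)\ge 0$ for every retained $j$. This does \emph{not} follow from $w(U_i)\le 0$ alone: a positive vertex of $U_i$ can prop up an intermediate prefix whose $M_i$-part is negative, and on a \emph{non-minimal} sequence the reduced prefix really can dip below zero, so this is exactly where minimality must enter. I would handle it by taking $i$ to be the smallest bad index (so $w(U_{i'})>0$ whenever $U_{i'}\neq\emptyset$ and $i'<i$) and analysing the single operation at step $i$ that drives $w(U_i)$ down to $0$ or below, reducing matters to a single-vertex exchange. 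The model case is the first unmark step $k$: there the prefix $M_1,\dots,M_{k-1}$ consists only of marks and $U_k=\{u\}$ is a single vertex, and delaying the marking of $u$ produces a valid sequence shorter by two steps, so by minimality it must violate non-negativity, which forces $w(u)=w(U_k)>0$. Propagating this local delay/keep-marked exchange through the general step-$i$ analysis, and using $w(U_i)\le 0$ to guarantee that the deleted weight never exceeds the slack $w(M_j)$, is the crux and the part I expect to require the most care.
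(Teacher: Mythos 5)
Your overall framework---contradict the minimality of the mark-unmark sequence by deleting the ``wasted'' operations and obtaining a strictly shorter non-negative sequence---is the same as the paper's, and the parts you do carry out (inward closedness of the reduced prefixes via the $(M_i\cup U_i)$-outward closedness of $U_i$, and the length decrease) are correct. But the step you yourself flag as the crux, namely that $w(M_j\setminus U_i)=w(M_j)-w(M_j\cap U_i)\ge 0$ for every retained $j$, is precisely the content of the claim, and your plan for closing it does not amount to a proof. As you correctly observe, $w(U_i)\le 0$ does not control the sign of $w(M_j\cap U_i)$ for intermediate $j$; your proposed remedy---take the smallest bad index $i$ and ``propagate'' a single-vertex delay exchange from the model case of the first unmark---is a local analysis of step $i$, whereas the obstruction is a global condition on all the sets $M_j\cap U_i$ for $j<i$, and there is no indication of how the propagation would tame these. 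So there is a genuine gap.

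The paper closes this gap with a device you are missing: it does not run the minimality argument on $U_i$ itself but proves the \emph{stronger} statement that every $U_i$-outward closed set has positive weight, and takes a \emph{minimal} $U_i$-outward closed set $X$ with $w(X)\le 0$ as the counterexample. Minimality of $X$ (as a set, not of the index $i$) then forces every non-empty $X$-inward closed subset $Y$ to have $w(Y)\le 0$: otherwise $X\setminus Y$ is a proper $U_i$-outward closed subset of $X$, hence $w(X\setminus Y)>0$ by minimality, and $w(X)=w(Y)+w(X\setminus Y)>0$, a contradiction. Since $X\cap M_j$ is $X$-inward closed, deleting from the first $i-1$ steps exactly the operations touching $X$ (rather than all of $U_i$) changes each prefix weight by $-w(X\cap M_j)\ge 0$, so the shorter sequence is automatically non-negative and the minimality of the mark-unmark sequence is contradicted. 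This strengthening-plus-minimal-set argument is the idea your proposal lacks; without it, or some substitute of comparable strength, the non-negativity of the reduced prefixes remains unproved.
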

	
	\begin{proof}
		We prove the stronger statement that the weight of any $U_i$-indirected set is positive ($U_i$ is clearly $U_i$-indirected). Suppose the statement is false and let $X$ be a minimal counterexample, i.e., $X$ is $U_i$-indirected and $w(X) \leq 0$. 
		If $Y$ is a non-empty $X$-outdirected set, then $X\setminus Y$ is $U_i$-indirected and a proper subset of $X$, hence $w(X\setminus Y) > 0$ (by the minimality of $X$). 
		
		If $w(Y)\ge 0$, then $w(X)=w(Y)+w(X\setminus Y)>0$ which is a contradiction. Thus we can suppose that for every $X$-outdirected set $Y$ we have $w(Y)<0$. 
		Now we construct a new sequence $M'_1,M'_2,\dots, M'_{t'}$. We start with the original sequence $M_1,M_2,\dots, M_t$. We remove each of $M_1,M_2,\dots, M_{i-1}$ that involves marking or unmarking an element of $X$, and we delete elements of $X$ from the remaining ones. After that we put $M_i,M_{i+1},\dots, M_{t}$ to the end of the sequence. For example, if $X=\{b\}$ and $i=6$ the sequence  $M_1=\{a\}$, $M_2=\{a,b\}$, $M_3=\{a,b,c\}$, $M_4=\{a,b,c,d\}$, $M_5=\{a,c,d\}$, $M_6=\{a,c,d,e\}$, $M_7=\{a,b,c,d,e\}$ becomes  $M_1'=\{a\}$, $M_2'=\{a,b\}$, $M_3'=\{a,b,c\}$, $M_4'=\{a,b,c,d\}$, $M_5'=\{a,b,c,d,e\}$, as we skip the second step (marking $b$) and the fifth step (unmarking $b$).
		
		We claim that this new sequence is also a mark-unmark sequence. First, note that the elements of $X$ are in $U_i$ and are therefore marked at some step after $i$, i.e., each element of $X$ will eventually be marked in the new sequence. Now we show that every $M_j'$ is outdirected. Indeed, if it is not outdirected, then there is an edge $uv$ with $u\not\in M_j'$ and $v\in M_j'$. The set $M_j$ is outdirected and contains $M_j'$ (thus contains $v$), therefore $u\in M_j$. Thus $u\in X$. Now, as $X$ is $U_i$-indirected, either $v\in X$ (which contradicts $v\in M_j'$), or $v\not\in U_i$. But $v\in M_j$ implies it is marked before the $i$th step in the original sequence, hence $v\not\in U_i$ is possible only if it is never unmarked, i.e., $v\in M_i$. However, $u\in X\subset U_i$ implies $u\not\in M_i$, which contradicts the outdirected property of $M_i$.
		
		Now we show that the new mark-unmark sequence $M'_1,M'_2,\dots, M'_{t'}$ is non-negative. Indeed, let $X_j'=X\cap M_j$, then $X_j'$ is an $X$-outdirected set, thus $w(X_j') < 0$. Now $w(M_j')=w(M_j)-w(X_j')> w(M_j)\ge 0$. This new non-negative mark-unmark sequence is shorter than the original sequence, which is of minimum length, a contradiction.
		This completes the proof of Claim~\ref{posi}.
	\end{proof}
	
	We now construct a new sequence by starting with the original mark-unmark sequence $M_1,M_2,\dots, M_t$ and skipping every step where a vertex is unmarked or marked beyond the first marking. Also, when we skip unmarking an element $x$, we add it to every later set.
	Let $M''_1,M''_2,\dots, M''_t$ be the new sequence.
	For example, the sequence  $M_1=\{a\}$, $M_2=\{a,b\}$, $M_3=\{a,b,c\}$, $M_4=\{a,b,c,d\}$, $M_5=\{a,c,d\}$, $M_6=\{a,c,d,e\}$, $M_7=\{a,b,c,d,e\}$ becomes  $M_1''=\{a\}$, $M_2''=\{a,b\}$, $M_3''=\{a,b,c\}$, $M_4''=\{a,b,c,d\}$, $M_5''=\{a,b,c,d,e\}$, as we skip the fifth step (unmarking $b$) and the seventh step (marking $b$ again).
	
	We claim that $M''_1,M''_2,\dots, M''_t$ is a mark sequence. Clearly, every vertex will be marked at some point as every vertex will be marked at the end of the original sequence. Furthermore, 
	every $M_i''$ is outdirected. Indeed, if $M''_i$ is not outdirected, then there is an edge $uv$ with $u\not\in M_i''$ and $v\in M_i''$. 
	But for $v$ to be marked, $u$ had to have been marked in a previous step. This is a contradiction, thus $M''_i$ must be outdirected.
	Finally, to show the sequence is non-negative we must prove that $w(M_i'')$ is non-negative for all $i$. For each $i$, there is some $j\geq i$ such that $M''_i = M_j \cup U_j$.
	The original sequence is non-negative, and $M_j$ and $U_j$ are disjoint by definition, thus, by Claim~\ref{posi}, we have $w(M_i'')=w(M_j)+w(U_j)>w(M_j)\ge 0$ and therefore the mark sequence is non-negative.
	This completes the proof of Theorem~\ref{main-one}.
\end{proof}

Note that with this method we prove a stronger statement. Suppose we want to mark only a certain subset of the vertices, and all the other vertices are used only to help achieve this. Consider, for example, the weighted DAG $G$ mentioned in the introduction with four vertices $\{a,b,c,d\}$, a single edge $bc$ and weights $w(a)=w(c)=w(d)=1$, $w(b)=-1$. Suppose our goal is to mark the subset $\{b,c\}$. Clearly, to mark the subset $\{b,c\}$ we must unmark vertices. For example we can mark $a$, mark $b$, mark $c$ and unmark $a$ to get the desired subset. We show that unmarking is needed only to reduce the set of marked vertices to the desired set, i.e.,  we never perform another mark after the first unmark.

We call a mark-unmark sequence \emph{partial} if it is the same as a mark-unmark sequence except it stops after $t$ steps and $M_t$ may be a proper subset of $V(G)$.

\begin{prop}\label{partial} Let $G$ be a weighted DAG, and suppose $M_1,M_2,\dots,M_t$ is a non-negative partial mark-unmark sequence which stops after $t$ steps. Then there is a non-negative partial mark-unmark sequence $M'_1,M'_2,\dots,M'_{j+k}$ such that the first $j$ steps are markings, the last $k$ steps are unmarkings, and $M_{j+k}'=M_t$.
\end{prop}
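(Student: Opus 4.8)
The plan is to mirror the proof of Theorem~\ref{main-one}, splitting the desired sequence into a pure marking phase that builds up to the set $A=\bigcup_{i=1}^t M_i$ of all vertices ever marked, followed by a pure unmarking phase that peels $A$ down to $M_t$. First I would record the structural facts that make this decomposition feasible. Writing $U_t=\bigcup_{i=1}^{t-1}M_i\setminus M_t$ for the vertices that end up unmarked, we have $A=M_t\cup U_t$, and $A$ is inward closed (any vertex of $A$ lies in some inward closed $M_j$, so its in-neighbours lie in $M_j\subseteq A$). The excerpt already notes that $U_t$ is $A$-outward closed; equivalently $M_t$ is $A$-inward closed, so the vertices of $U_t$ can be removed one at a time in reverse topological order, each as a current sink, and every intermediate set stays inward closed. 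This shows the unmarking phase is \emph{valid}; the whole difficulty is to make it \emph{non-negative}.

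For the marking phase I would reuse Theorem~\ref{main-one} essentially verbatim. Taking a minimum-length non-negative partial mark-unmark sequence ending at $M_t$ (if it has no unmark step we are already done with $k=0$), Claim~\ref{posi} and its proof carry over to partial sequences with only cosmetic changes, so every $U_t$-outward closed set has positive weight. Skipping every unmarking step and every marking beyond the first then produces a pure mark sequence whose $i$-th set is $P_i=M_i\cup U_i$, with $w(P_i)=w(M_i)+w(U_i)\ge w(M_i)\ge 0$; this reaches $A$ and supplies the first $j$ steps.

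The crux is the unmarking phase, for which I would prove the following counterpart of Claim~\ref{posi}: every inward closed set $T$ with $M_t\subseteq T\subseteq A$ satisfies $w(T)\ge 0$. Granting this, peeling $U_t=A\setminus M_t$ off $A$ in any reverse topological order keeps every prefix non-negative, since each set encountered is exactly such a $T$, and concatenating with the marking phase gives the required sequence $M_1',\dots,M_{j+k}'$ with $M_{j+k}'=M_t$. To prove the bound I would argue by minimal counterexample, dually to Claim~\ref{posi}: let $T$ be an inclusion-minimal inward closed set with $M_t\subseteq T\subseteq A$ and $w(T)<0$, and set $S=T\setminus M_t\subseteq U_t$. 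Removing any non-empty ``top'' part $Z\subseteq S$ that is $S$-outward closed leaves an inward closed set $T\setminus Z\supseteq M_t$ of non-negative weight (by minimality), whence $w(Z)=w(T)-w(T\setminus Z)<0$; thus every non-empty $S$-outward closed set has negative weight. I would then delete from the original sequence the marking and unmarking steps of an appropriate outward closed chunk built from $S$, exactly in the style of the proof of Claim~\ref{posi}, to obtain a strictly shorter non-negative partial sequence still ending at $M_t$, contradicting minimality.

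The main obstacle is precisely this last deletion step. In Claim~\ref{posi} the deleted set met each $M_j$ in an $X$-\emph{inward} closed piece, whose weight was controlled to be non-positive; here the weights we control are those of $S$-\emph{outward} closed sets, while marking naturally produces inward closed pieces, so the two closure directions fight each other. Getting the signs to line up — choosing the chunk to delete (for instance the upward closure of $S$ inside $U_t$, or a carefully trimmed variant) so that its intersection with each $M_j$ is guaranteed non-positive, while keeping the truncated sequence valid and ending at $M_t$ — is the delicate heart of the argument and the step I expect to require the most care. Throughout, the minimum-length hypothesis is essential: it rules out the pathological cases (pointlessly marking and then unmarking a heavy negative vertex) in which a naive local swap of an unmark past a later mark would destroy non-negativity.
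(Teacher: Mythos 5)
Your marking phase is exactly the paper's: take a minimal non-negative partial sequence, carry over Claim~\ref{posi}, and keep only first markings to reach $A=M_t\cup U_t$ non-negatively. The gap is in the unmarking phase: the key claim you propose --- that every inward closed set $T$ with $M_t\subseteq T\subseteq A$ has $w(T)\ge 0$, so that \emph{any} reverse topological peeling of $U_t$ works --- is false, even for minimum-length sequences. Take vertices $h,p,q,x,y$ with edges $p\to q$ and $x\to y$ and weights $w(h)=10$, $w(p)=-10$, $w(q)=20$, $w(x)=-15$, $w(y)=15$, and target $M_t=\{x,y\}$. The sequence mark $h,p,q,x,y$, unmark $q,p,h$ is non-negative (partial sums $10,0,20,5,20,0,10,0$) and is of minimum length, since marking $x$ requires $q$, hence $p$, hence $h$ to be marked first. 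Here $A=\{h,p,q,x,y\}$ and $T=\{p,x,y\}$ is inward closed with $M_t\subseteq T\subseteq A$ but $w(T)=-10$. Correspondingly, the reverse topological peeling orders $q,h,p$ and $h,q,p$ both pass through $\{p,x,y\}$ and fail; only $q,p,h$ works. So the statement you hoped to prove by minimal counterexample has an actual counterexample, which is why the deletion step you flag as ``the delicate heart'' cannot be made to work (in the example, deleting the steps involving $p$ and $q$ leaves the invalid prefix $h,x$ of weight $-5$, so no shorter sequence exists and no contradiction is available).

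The paper avoids this entirely by not peeling in an arbitrary reverse topological order: it unmarks the vertices of $U$ \emph{in the order in which their first unmarking occurred in the original sequence} (in the example above, $q,p,h$ --- precisely the order that works). With that choice, after each unmarking the set of marked vertices equals $M_i$ for a suitable step $i$ of the original sequence together with a $U_i$-outward closed set, and non-negativity follows from $w(M_i)\ge 0$ plus the strengthened form of Claim~\ref{posi}. To repair your argument you would need to replace your claim about all intermediate inward closed sets by this weaker statement about the specific sets arising from the inherited unmarking order.
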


We omit some details from the proof of Proposition~\ref{partial} as it is very similar to the proof of Theorem~\ref{main-one}.

\begin{proof}[Proof sketch] 
	For $i\in [t]$, put $U_i=(\cup_{j=1}^{i-1} M_j) \setminus M_i$, i.e., the set of elements that have been unmarked in any of the first $i$ steps.
	As in the proof of Theorem~\ref{main-one}, we have that the weight of any $U_i$-indirected set is positive (from the proof of Claim~\ref{posi}). Let $U$ be the set of vertices that are unmarked at some step in the mark-unmark sequence $M_1,M_2,\dots,M_t$.
	
	We now construct a new sequence by starting with the original mark-unmark sequence $M_1,M_2,\dots, M_t$ and skipping every step where a vertex is unmarked or marked beyond the first marking.
	Let $M'_1,M'_2,\dots, M'_j$ be the new sequence. Then add $|U|=k$ steps to the resulting sequence where we unmark the vertices of $U$ in the order in which their first unmarking occurred in the original sequence. In other words, when an element of $U$ is unmarked first, we move that step to the end of the sequence, and skip all other steps where that element is chosen to be marked or unmarked. The resulting sequence is $M'_1,M'_2,\dots,M'_{j+k}$.
	
	To finish the proof we need to show that $M'_1,M'_2,\dots,M'_{j+k}$ satisfies the definition of a mark-unmark sequence (we omit the details) and that the sequence is non-negative. To see that it is non-negative, observe that after any step of the new sequence the set of marked elements is the same as the set of marked elements after some step in the original sequence, with the addition of an $U_i$-indirected set (which is positive by the proof of Claim~\ref{posi}).
	This completes the proof of Proposition~\ref{partial}.
\end{proof}


\section{\NPcx ness}\label{NPC-section}

In this section we will prove Theorem~\ref{main-two}. The proof is by a series of reductions of the original problem to a known \NPc problem. We restate the decision problem in Theorem~\ref{main-two} here.

\begin{problem}\label{main-two-prob}
	Given a weighted DAG $G$ such that every vertex is either a source or a sink, determine whether $G$ has a non-negative topological ordering.
\end{problem}

First we show that there is a reduction from the following problem posed by Rote\footnote{Graphs extendable to a uniquely matchable bipartite graph, Egres Open collection,	http://lemon.cs.elte.hu/egres/open} to Problem~\ref{main-two-prob}.

\begin{problem}\label{graf}
	Given a balanced bipartite graph $G$, determine whether edges can be added to $G$ to create a bipartite graph with a unique perfect matching.
\end{problem}

We will need the following well-known observation. We include the easy proof for the sake of completeness.

\begin{obs}\label{parositas} 
	Let $G$ be a balanced bipartite graph with classes $A$ and $B$. The graph $G$ contains a unique perfect matching $M$ if and only if there is an ordering $A=\{a_1,a_2,\dots, a_n\}$ and $B=\{b_1,b_2,\dots, b_n\}$ such that for all $i \in [n]$ we have $a_ib_i \in M$  and $a_ib_j \notin E(G)$ if $1 \leq j <i \leq n$.
\end{obs}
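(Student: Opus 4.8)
The statement is an ``if and only if,'' so the plan is to handle the two implications separately, with the reverse direction being routine and the forward direction carrying the main idea. For the easy direction, suppose the orderings $A=\{a_1,\dots,a_n\}$ and $B=\{b_1,\dots,b_n\}$ exist; I would show $M$ is the only perfect matching. Any perfect matching corresponds to a permutation $\sigma$ with $a_ib_{\sigma(i)}\in E(G)$ for every $i$. Since the hypothesis forbids every edge $a_ib_j$ with $j<i$, each such edge forces $\sigma(i)\ge i$. Summing over $i$ and using $\sum_i \sigma(i)=\sum_i i$ gives $\sigma(i)=i$ for all $i$, so the matching is exactly $M=\{a_ib_i : i \in [n]\}$. (Equivalently, one peels off $\sigma(n)=n$, then $\sigma(n-1)=n-1$, and so on downward.)

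For the forward direction, assume $G$ has a unique perfect matching $M$. I would argue by induction on $n$, and the engine of the induction is the classical fact that a graph with a unique perfect matching must contain a vertex of degree $1$. To prove this fact in our bipartite setting, I would suppose instead that every vertex has degree at least $2$ and then, starting from any matched vertex, build an alternating walk that alternates matching and non-matching edges: at each matched vertex the match edge is forced, and at each vertex reached along a matching edge the assumption of degree at least $2$ supplies a non-matching edge to continue. Finiteness forces the walk to repeat a vertex, yielding an alternating cycle; swapping matching and non-matching edges along it produces a second perfect matching, contradicting uniqueness. This existence of a degree-$1$ vertex is the main obstacle, i.e.\ the only genuinely nontrivial step in the argument.

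Given a degree-$1$ vertex $v$, its unique neighbor is automatically its partner $u=M(v)$. If $v\in A$ I would designate $v=a_n$ and $u=b_n$ and place the pair last; if $v\in B$ I would designate $v=b_1$ and $u=a_1$ and place the pair first. Deleting $u$ and $v$ leaves a balanced bipartite graph $G'$ whose perfect matching $M\setminus\{uv\}$ is again unique, since any second perfect matching of $G'$ would extend by the edge $uv$ to a second perfect matching of $G$. By induction $G'$ admits orderings of its two classes satisfying the required condition, and appending (or prepending) the pair $\{u,v\}$ preserves it: because $v$ has no edge to the remaining class other than $uv$, the newly indexed vertex creates no forbidden edge $a_ib_j$ with $j<i$. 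The base case $n\le 1$ is immediate, which completes the plan.
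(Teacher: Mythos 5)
Your proof is correct. Note that the paper itself offers no proof of this statement --- it is introduced as ``the following easy observation'' and used directly --- so there is no argument of the authors' to compare yours against; your write-up supplies what the paper omits. What you give is the standard argument: the reverse implication via the permutation count ($\sigma(i)\ge i$ for all $i$ together with $\sum_i \sigma(i)=\sum_i i$ forces $\sigma(i)=i$), and the forward implication by induction on $n$ using the classical lemma that a graph with a unique perfect matching contains a vertex of degree $1$. Your sketch of that lemma (alternating walk, first repeated vertex, swap along the resulting cycle) is sound; the one point worth spelling out is why the closed walk obtained at the first repeated vertex is genuinely $M$-alternating at the junction, and here bipartiteness does the work: the repeated vertex lies in the same class as its earlier occurrence, so the cycle has even length, hence the two cycle edges meeting at the junction vertex have opposite types (one matching, one non-matching). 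With that detail filled in, the induction step --- placing a degree-$1$ vertex of $A$ as $a_n$ (with its partner as $b_n$), or a degree-$1$ vertex of $B$ as $b_1$ (with its partner as $a_1$), deleting the matched pair, and observing that the induced subgraph inherits the unique-perfect-matching property --- goes through exactly as you describe.
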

\begin{proof}
	If there are zero or at least two perfect matchings in $G$, such an ordering cannot exist.
	If there is a unique perfect matching, then one of the vertices must have degree one.
	We pick this vertex to be $a_n$ or $b_1$, depending which class it is from, and its neighbor by $b_n$ or $a_1$, respectively.
	After deleting this degree one vertex and its neighbor the remaining graph still has a unique perfect matching, so we can find an ordering of the other vertices by induction.
\end{proof}

We now transform a given bipartite graph $G$ with classes $A$ and $B$ into a weighted DAG such that every vertex is a source or a sink. 
First orient all edges in $G$ such that they are directed to $B$. Then assign weight $-1$ to every vertex of $A$ and weight $1$ to every vertex of $B$. 
Finally, add an isolated vertex, $v$, with weight $1$ to $G$. 
If $G$ is extendable to a bipartite graph with a unique perfect matching, then $v$ together with the order given by Observation \ref{parositas} gives a non-negative topological ordering of $G$. In particular, $v, a_1, b_1, a_2, b_2, \dots, a_n, b_n$ is a non-negative topological ordering of $G$. Furthermore, any non-negative topological ordering on $A$ and $B$ satisfies the requirements of Observation~\ref{parositas} (after adding the possibly missing $a_ib_i$ edges).

It was noted by Vialette~\footnote{Personal communication.} that the following problem can be reduced to Problem~\ref{graf} by adding $n-k$ isolated vertices to each class.

\begin{problem}\label{grafk}
	Let $G$ be a balanced bipartite graph with class sizes $n$ and let $k$ be a positive integer.
	Determine whether $G$ has an induced subgraph $H$ with $k$ vertices from each class of $G$ such that edges can be added to $H$ to create a bipartite graph with a unique perfect matching.
\end{problem}

To complete the proof of Theorem \ref{main-two}, we must show Problem \ref{grafk} is \NPcx.
This was done (for practically the same problem) by Dasgupta, Jiang, Kannan, Li and Sweedyk~\cite{D} using a reduction from the \NPc LARGEST BALANCED INDEPENDENT SET problem.\footnote{In \cite{GJ} the equivalent problem of finding the largest BALANCED COMPLETE BIPARTITE SUBGRAPH is shown to be \NPcx.}
We include a less technical proof of their reduction.

We call an independent set in a bipartite graph \emph{balanced} if each class of the bipartite graph contains exactly half of the vertices of the independent set.
We first state the LARGEST BALANCED INDEPENDENT SET problem.

\begin{problem}\label{indep}
	Let $G$ be a bipartite graph and let $k$ be a positive integer.
	Determine whether $G$ contains a balanced independent set on $2k$ vertices.
\end{problem}

Let $G$ be a bipartite graph with classes $A$ and $B$ and let $k$ be a positive integer. Construct a bipartite graph $G'$ as follows.
The vertex set of $G'$ consists of $k+1$ copies of each vertex $v$ in $G$, denoted by pairs $(v,1),(v,2),\dots, (v,k+1)$. We connect two vertices $(u,i)$ and $(v,j)$ in $G'$ by an edge if either of the following are satisfied:

(1) $u \in A$ and $v \in B$ and $i < j$.

(2) $uv \in E(G)$.

\begin{claim}\label{UPM}
	The graph $G'$ has a subgraph $H$ on $2k^2+2k$ vertices such that edges can be added to $H$ to create a bipartite graph with a unique perfect matching if and only if $G$ has a balanced independent set with $2k$ vertices.
\end{claim}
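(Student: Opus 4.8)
The plan is to prove both directions of the equivalence by translating "balanced independent set of size $2k$ in $G$" into a structural statement about subgraphs of $G'$ whose edges can be completed to a unique perfect matching (UPM). Throughout I would lean on Observation~\ref{parositas}, which characterizes UPM-completability: a balanced bipartite graph on $m+m$ vertices is UPM-completable precisely when its two classes can be ordered $a_1,\dots,a_m$ and $b_1,\dots,b_m$ so that $a_ib_j\notin E$ whenever $j<i$. So the real task is to understand, for an induced (or arbitrary) subgraph $H$ of $G'$ with $k^2+k$ vertices in each class, when such a ``staircase-free'' ordering exists, and to connect the existence of a large such $H$ to a balanced independent set in $G$.

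First I would set up notation and recording the block structure of $G'$. Each original vertex $v$ spawns a column of $k+1$ copies $(v,1),\dots,(v,k+1)$, and the edge rules mean: within the $A$-side and $B$-side, copies of $A$-vertices and $B$-vertices are joined by the ``index-increasing'' rule (1), while copies inherit all original adjacencies via rule (2). I would compute that the target size $2k^2+2k=2k(k+1)$ is exactly $k$ full columns per side, which strongly suggests that the extremal $H$ should pick $k$ original vertices from $A$ and $k$ from $B$ and take \emph{all} $k+1$ copies of each — the count $k(k+1)$ per side matches. So the natural guess is that a good $H$ corresponds to choosing a $k$-subset $A_0\subseteq A$ and a $k$-subset $B_0\subseteq B$.

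For the forward-to-backward correspondence I would argue as follows. Given a balanced independent set $S$ in $G$ with $k$ vertices $A_0$ in $A$ and $k$ in $B_0$, take $H$ to be the subgraph induced on all $k+1$ copies of every vertex in $A_0\cup B_0$. Because $A_0\cup B_0$ is independent, rule (2) contributes no edges between the chosen $A$-copies and $B$-copies, so the only edges of $H$ come from rule (1): $(u,i)\sim(v,j)$ with $u\in A_0$, $v\in B_0$, $i<j$. I would then exhibit an explicit UPM-completion ordering: order the $A$-copies and $B$-copies so that each $a_\ell b_\ell$ pair is a copy-pair and the index structure only produces forbidden edges in the ``allowed'' direction of Observation~\ref{parositas}; concretely, interleaving by index $i$ (all index-$1$ copies first, then index-$2$, etc.) makes every present edge go from a lower-indexed $A$-copy to a strictly-higher-indexed $B$-copy, which is exactly the non-staircase pattern, yielding a unique perfect matching after adding the diagonal edges.

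The converse is where I expect the main obstacle, and I would structure it as a counting/pigeonhole argument. Suppose $H$ has $k^2+k$ vertices per side and is UPM-completable. The key lemma I would aim for is that any UPM-completable subgraph this large must in fact consist of $k$ \emph{full} columns on each side whose underlying original vertices form an independent set — the $k+1$ copies per column force the selection to be ``all or nothing'' per column and forbid any original edge. The intuition is that if $H$ used copies from more than $k$ original $A$-vertices (or picked some original edge $uv\in E(G)$ with copies on both sides), then rule (2) injects a complete bipartite block of copy-edges that, combined with the $(k+1)$-fold multiplicity, creates an unavoidable ``staircase'' obstruction violating Observation~\ref{parositas}; a set of $k+1$ mutually ``crossing'' copies cannot be linearly ordered without a forbidden pair. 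I would make this precise by showing that a UPM-completable bipartite graph cannot contain, for any ordering, a configuration equivalent to a $2\times 2$ (or larger) all-present block that would be forced by selecting $k+1$ copies of an edge, and then deduce that the chosen original vertices must be independent and number exactly $k$ per side. Pinning down exactly which multiplicity/crossing configuration is the certificate of non-completability, and verifying the count forces the clean ``$k$ full independent columns'' structure, is the delicate step and the part I would spend the most care on.
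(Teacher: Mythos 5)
Your forward direction is fine and matches the paper: take all $k+1$ copies of the $2k$ independent vertices, note that rule (2) contributes nothing, and order by copy-index so that every rule-(1) edge points ``forward'' in the sense of Observation~\ref{parositas}.

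The converse, however, rests on a key lemma that is false. You claim that any UPM-completable $H$ with $k^2+k$ vertices per side must consist of $k$ \emph{full} columns per side whose underlying original vertices form an independent set, with no original edge represented in $H$. Here is a counterexample with $k=1$: let $G$ have classes $A=\{u,u'\}$, $B=\{v,v'\}$ and the single edge $uv$, and let $H$ have $A$-side $\{(u,1),(u',1)\}$ and $B$-side $\{(v,1),(v',1)\}$, which is $2k^2+2k=4$ vertices. Rule (1) contributes no edges (all indices equal $1$), and rule (2) contributes only $(u,1)(v,1)$; adding $(u',1)(v',1)$ yields a graph whose unique perfect matching is these two edges. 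So $H$ is UPM-completable, yet it uses two distinct original vertices per side rather than $k=1$, contains no full column, and contains copies of the adjacent pair $u,v$. Every part of your structural lemma fails, even though the conclusion of the claim still holds (via $\{u',v'\}$). The underlying issue is that a $G$-edge $uv$ only forces a complete bipartite block between the copies of $u$ and of $v$ that are actually \emph{present} in $H$, and such a block is perfectly compatible with a UPM ordering as long as all the $u$-copies precede all the $v$-copies; it is only the $(k+1)$-fold full columns that make adjacency an obstruction. So $H$ need not be ``all or nothing per column,'' and the delicate step you flag cannot be completed as stated.

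The paper avoids any structural claim about $H$. It takes the ordering guaranteed by Observation~\ref{parositas}, lets $(w,i)$ be the first $A_H$-vertex by which $k-1$ distinct original first coordinates have appeared among its predecessors (say at position $a$, with $m$ the minimum second coordinate among those predecessors), bounds $a\le(k-1)(k+2-m)$ so that at least $m(k-1)+1$ vertices of $B_H$ come after position $a$, observes via rule (1) that all of these must have second coordinate at most $m$, and applies pigeonhole to extract $k$ of them with equal second coordinate (hence distinct originals) that are non-adjacent in $G$ to $k$ distinct originals appearing early in $A_H$. You would need to replace your lemma with an extraction argument of this kind.
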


\begin{proof}
	If $G$ has a balanced independent set with $2k$ vertices, then call $H$ the induced subgraph of $G'$ spanned by the $k+1$ copies of this independent set. Clearly, $H$ has $2k^2+2k$ vertices. Furthermore, it is easy to see that adding the edges of a matching to each copy of the independent set results in a bipartite graph with a unique perfect matching.
	
	Now suppose that $G'$ has a subgraph $H$ on $2k^2+2k$ vertices such that edges can be added to $H$ to create a bipartite graph with a unique perfect matching. Let $A_H$ and $B_H$ be the two classes of $H$ defined by the partition of $G$. Now order the vertices of $A_H$ and $B_H$ by Observation~\ref{parositas} such that if $a<b$, then there is no edge between the $a$th vertex in $A_H$ and the $b$th vertex in $B_H$.
	
	Let $(w,i)$ be the first vertex in the ordering of $A_H$ such that among the vertices that appear earlier in the ordering there are $k-1$ different values in the first coordinate. Let $a$ be the index of $(w,i)$ in the ordering of $A_H$. 
	Let $m$ be the smallest value among the second coordinates of the vertices with index less than $a$ in $A_H$. Thus we have $a \leq (k-1)(k+1-m+1)$. Therefore $a \geq k^2+k-(k-1)(k+1-m+1)-1 = m(k-1) + 1$.
	
	Recall that if $b>a$, then there is no edge between the $a$th vertex in $A_H$ and the $b$th vertex in $B_H$. Furthermore, by (1), for $i<j$ there is an edge between each $(u,i) \in A_H$ and $(v,j) \in B_H$. Thus every vertex in $B_H$ with index $b>a$ has second coordinate at most $m$. There are at least $m(k-1) + 1$ vertices in $B_H$ with index $b>a$. Therefore, by the pigeonhole principle, there is a set of $k$ of these vertices with the same second coordinate. In particular, these $k$ vertices represent distinct vertices in the original graph $G$. By the definition of $(w,i)$ there is a set of $k$ vertices in $A_H$ with index at most $a$ and distinct first coordinates, i.e., distinct vertices in $G$. These two sets of vertices form an independent set of size $2k$ in $G$.
	This completes the proof of Claim~\ref{UPM}.
\end{proof}

Thus we have established that Problem~\ref{grafk} is \NPc and therefore we have proved Theorem \ref{main-two}.

We end this section with another problem posed by the fourth author\footnote{Positive topological ordering, take 3,	\url{http://cstheory.stackexchange.com/questions/1842}.} that is equivalent to Problem \ref{graf}, and thus also \NPcx. 

\begin{problem}\label{matrix}
	Suppose $M$ is an $n \times n$ matrix.
	Determine whether it is possible to reorder its rows and columns such that we get an upper-triangular matrix.
\end{problem}

The equivalence of Problems \ref{graf} and \ref{matrix} is as follows.
Define a bipartite graph with classes $A$ and $B$ from $M$ by letting $A$ be the rows of $M$ and $B$ be the columns of $M$, with an edge between two vertices if and only if the corresponding entry of $M$ is non-zero. 
It follows from Observation \ref{parositas} that we can reorder the rows and columns of $M$ to get an upper-triangular matrix if and only if we can extend $G$ to a bipartite graph with a unique perfect matching. 
The other direction is shown similarly.



\section*{Acknowledgments}
We would like to thank A.~Frank, T.~Kir\'aly and L.~V\'egh for bringing Problem~\ref{main-two-prob} to our attention and G.~Rote for useful discussions about the equivalence of Problem~\ref{grafk} and Problem~\ref{graf}.

\section*{Addendum}
Since this paper has gone to press it has come to the attention of the authors that Problem~\ref{matrix} has been also shown to be \NPc by Fertin, Rusu and Vialette~\cite{FRV}. They also observed (as we do in Section~\ref{NPC-section}) that the proof in \cite{D} is not complete.
Furthermore, they point out that Problem~\ref{matrix} had been asked earlier by Wilf~\cite{W}.


\end{document}